\newtheorem{thm}{Theorem}[section]
\newtheorem{lemma}[thm]{Lemma}
\newtheorem{remark}[thm]{Remark}
\newtheorem{corollary}[thm]{Corollary}
\begin{document}
\baselineskip=16pt

\title[Unitary and orthogonal equivalence of sets of matrices]
{Unitary and orthogonal equivalence of sets of matrices}
\author{Naihuan Jing}
\address{Department of Mathematics, North Carolina State University,
Raleigh, NC 27695-8205, USA}
\email{jing@math.ncsu.edu}

\thanks{{\scriptsize
\hskip -0.4 true cm MSC (2010): Primary: 15A21; Secondary: 15A27.
\newline Keywords: Simulaneous unitary equivalence, simultaneous similarity, simultaneous orthogonal equivalence,
congruence\\
Supported by Simons Foundation grant 198129 and NSFC grant 11271138.
}}

\maketitle

\begin{abstract} Two matrices $A$ and $B$ are called unitary (resp. orthogonal) equivalent if $AU=VB$ for two
unitary (resp. orthogonal) matrices $U$ and $V$. Using trace identities,
criteria are given for simultaneous unitary, orthogonal or complex orthogonal
equivalence between two sets of matrices. 
\end{abstract}

\section{Introduction}\label{section 1} \setcounter{equation}{0}

Let $\{A_1, A_2, \cdots, A_k\}$ and $\{B_1, B_2, \cdots, B_k\}$ be two sets of
complex matrices of size $m\times n$. We say that $\{A_i\}$ and $\{B_i\}$ are (simultaneous)
{\it unitary equivalent} if there exist two unitary matrices $U$, $V$ of respective dimension such that
\begin{align}\label{E:uni-equiv}
 V^*A_iU=B_i, \qquad i=1, \cdots, k,
 \end{align}
where $*$ means the transpose and conjugation. Two sets of real (resp. complex) matrices $\{A_i\}$, $\{B_i\}$ of the same size are said to be {\it orthogonal (resp. complex orthogonal) equivalent}
 if there are orthogonal (resp. complex orthogonal) matrices $P$ and $Q$ such that
 $Q^tA_iP=B_i$ for all $i$. When the matrices are square matrices and $U=V$, then
 two sets are said to be {\it (simultaneous) unitary similar}. We also
 say that $\{A_i\}$ are simultaneous {\it complex orthogonal similar} to $\{B_i\}$
 if $A_iO=OB_i$ for a complex matrix $O$ such that $OO^t=O^tO=I$.

 Simultaneous unitary similarity has been
 an important problem in representation theory.  When the set of matrices
 has no group structure, one of the first nontrivial results
 was given by Specht \cite{S2}, and the question has been studied by many people,
 see in particular \cite{F} for its history and
 difficulty. In \cite{GHS} algorithms are given to related problems on simultaneous unitary similarity or
 congruence for complex square matrices (see also \cite{AI}).
 Recently, important applications are found in quantum computation, where a key question
 of local unitary equivalence between two quantum states has been reduced to simultaneous orthogonal equivalence
 between two sets of real matrices (cf. \cite{JL}). Current resurgent interest
in quiver theory is also a reflection of the importance of the problem.

Geometrically (\ref{E:uni-equiv}) represents the matrix representation of a set of
linear transformations under common change of orthonormal bases in the domain and range
spaces. If we relax (\ref{E:uni-equiv}) to only require
that $A_iP=QB_i$ for two non-singular matrices $P, Q$
of respective dimensions, then $A_i$ and $B_i$ are two
matrix representations of a set of linear transformations $L_i$ under
different bases. The special case of two linear transformations ($k=2$) was answered by
Kronecker's theory \cite{G} of the matrix pencil $A_1+\lambda A_2$ using elementary divisors.
In further special cases of symmetric and antisymmetric matrices there are canonical forms under
similarity \cite{T}.
But for the corresponding unitary problem appropriate verifiable conditions of $P$ and $Q$ are needed.
In various applications in quantum computation, one is
concerned with the problem of how to judge simultaneous orthogonal
equivalence rather than finding the intertwining matrix, as the final solution relies on
the relevant problem in invariant theory.

In this note we will give criteria for (\ref{E:uni-equiv}) and its analogues
and show that two sets of complex
(real/complex) matrices are unitary (orthogonal/complex orthogonal) equivalent if and only if the corresponding traces of words in
$A_iA_j^*$ (resp. $A_iA_j^t$) are invariant. Usually the real version of a problem is harder, fortunately there is a satisfactory
solution for almost all of our statements thanks to a simple argument
to pass from the complex numbers to the real numbers. Our approach uses some results from semigroup theory.
As an application, we also obtain an alternative version of Albert's criterion
for simultaneous similarity of symmetric matrices (Remark \ref{r:jordan}), which
avoids the complicated special cases in the original argument.

We remark that a care is made to be self-contained and to
indicate the generalization to semigroups
from the corresponding results in finite groups.

\section{Square matrices}\label{section 2} \setcounter{equation}{0}

We first consider the {\it unitary similarity} (resp. {\it orthogonal similarity})
of two sets of complex (resp. real) square matrices.
Let $\{A_i\}$ be a set of complex (resp. real) matrices of same dimension.
We say that $\{A_i\}$
is {\it hermitian closed} (resp. {\it transpose closed})
 if each $A_i^*$ (resp. $A^t$) is contained in the span $\langle A_i\rangle$ of the set $\{A_i\}$. Two
 sets $\{A_i\}$ and $\{B_i\}$ of complex (resp. real)
 square matrices of equal dimension are said to be {\it unitary similar} (resp. {\it orthogonal similar})
 if there exists a unitary (resp. orthogonal) matrix $U$ such that $U^*A_iU=B_i$ for all $i$, or equivalently
 $U$ {\it intertwines $A_i$ and $B_i$} for all $i$.

Let $S$ be a set of matrices of equal dimension, a (product) {\it word $w(S)$ in the alphabet $S$} is a
matrix product $xy\cdots z$, where $x, y, \ldots, z$ are arbitrary matrices in $S$. We use $W(S)=\{w(S)\}$ to denote
the set of words in the alphabet $S$.

To give our first main result we need to
recall some basic notions of semigroups and their modules.
A set $G$ is a {\it semigroup} if there is a closed binary operation on $G$ that satisfies
associativity and has an identity. If every element of $G$ is invertible, then
$G$ becomes a group. We follow \cite[I]{J} to say that a vector space $V$ is
a $G$-module if there is an action of $G$ on $V$: $G\times V\ni(x, v)\mapsto x.v\in V$ such that $x.(y.v)=(xy).v$
for all $x, y\in G, v\in V$. We assume that all modules considered in this paper are finite dimensional left modules,
but we do not assume that the semigroup $G$ is finite. Equivalently
if $G$ is a (semi)group of linear transformations on $V$, then $V$ is a $G$-module
with the action given by the transformation. In this case, the entries of the matrix representation
are called the {\it coordinate functions} of $G$.

Two $G$-modules $V_i$ are called {\it equivalent} or {\it isomorphic}, denoted by $V_1\simeq V_2$,
if there is a linear map
$\phi:V_1\longrightarrow V_2$ such that $x.\phi(v)=\phi(x.v)$ for all $x\in G, v\in V$. In terms of the
matrix representation, this means that $xP=Px$ ($\forall x\in G\subset \mathrm{End}(V)$) for
some non-singular matrix $P$.

The notions of {\it reducible, completely reducible} and {\it decomposable}
$G$-modules can be defined as in the situation of group modules (cf. \cite[I]{J}).
The most useful ones for us are (i) $V$ is an irreducible $G$-module if $V$ has no non-trivial $G$-submodules.
It is known that the Schur lemma holds in this case, i.e., the only $G$-homomorphism between
two irreducible $G$-modules are scalar homomorphisms.
(ii) $V$ is a completely reducible $G$-module if
$V\simeq V_1\oplus\cdots\oplus V_r$ where $V_i$ are irreducible.

If $G$ is a semigroup of linear transformations on $V$, then the associated $G$-module $V$ is reducible
iff under the matrix representation, every element of $G$ is
uniformly of the block triangular form $x=\begin{bmatrix}x_1 & 0\\ x_3 & x_2\end{bmatrix}$ of
fixed shape, where $x_1$ and
$x_3$ are square matrices of size smaller than that of $x$; and $V$ is {\it decomposable} if all $x=\begin{bmatrix}x_1 & 0\\ 0 & x_2\end{bmatrix}$ with the fixed shape.

A $G$-module $V$ is called {\it unitary} if there exists a positive-definite hermitian form $(\ , \ )$ on $V$ such
that $(xu, xv)=(u, v)$ for any $x\in G$, and arbitrary $u, v\in V$. Equivalently
the corresponding linear transformations of the elements of $G$ are represented by unitary matrices.
In the case of a transpose-closed $G$-module,
this is equivalent to the existence of a $G$-invariant symmetric bilinear form on $V$.

The {\it character} $\chi(V)$ of $G$-module $V$ is the trace function $\chi(V)(x)=tr_V(x)$, $x\in G$.
We will show that two irreducible $G$-modules are equivalent iff their characters are the same
(Theorem \ref{T:Maschke}).

In finite group theory, every complex module is completely reducible (Maschke's theorem). For the semigroup theory,
this is not true in general, but we still have some form of Maschke's theorem for (possibly infinite) semigroups.

The following theorem was mainly due to Frobenius and Schur \cite{FS} 
for linear transformations. We remark that the statements hold for semigroups of
transpose closed linear transformations as well.

\begin{thm}\label{T:Maschke} Let $G$ be a semigroup.
(1) If $V$ is a unitary $G$-module, then $V$ is completely reducible.
(2) Two completely reducible $G$-modules are equivalent if and only if their characters are
equal.
\end{thm}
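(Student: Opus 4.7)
For part (1), I would follow the classical Maschke argument adapted to the semigroup setting. Let $(\,,\,)$ denote the $G$-invariant positive-definite Hermitian form on $V$. Given a proper nontrivial $G$-submodule $W\subset V$, set $W^\perp=\{v\in V:(w,v)=0\text{ for all }w\in W\}$, so that $V=W\oplus W^\perp$ as vector spaces. To verify that $W^\perp$ is a $G$-submodule, I would take $v\in W^\perp$, $x\in G$, and $w\in W$, and compute $(w,xv)=(x^{-1}w,v)$, using that $x$ acts as a unitary, hence invertible, operator on $V$. The key observation is that $x$ restricts to an injective linear map $W\to W$ (since $xW\subseteq W$ by $G$-invariance), and by finite-dimensionality this restriction is a bijection, so $x^{-1}w\in W$ and hence $(x^{-1}w,v)=0$. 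Complete reducibility then follows by induction on $\dim V$.

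The forward direction of part (2) is immediate: if a non-singular $P$ intertwines the two actions, then the representing matrices of $x\in G$ are conjugate, so their traces agree. For the converse, I would decompose each $V_i$ into its isotypic components $V_i\simeq\bigoplus_\lambda n_\lambda^{(i)}U_\lambda$, where the $U_\lambda$ are pairwise non-isomorphic irreducible $G$-modules. The character identity $\chi(V_1)=\chi(V_2)$ then reduces to $\sum_\lambda (n_\lambda^{(1)}-n_\lambda^{(2)})\chi_\lambda=0$ as functions on $G$, so it suffices to prove linear independence of the distinct irreducible characters $\chi_\lambda$.

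To establish this independence, I would extend each representation $\pi_\lambda:G\to\mathrm{End}(U_\lambda)$ linearly to the semigroup algebra $\mathbb{C}[G]$. Burnside's theorem gives $\pi_\lambda(\mathbb{C}[G])=\mathrm{End}(U_\lambda)$, since $U_\lambda$ is irreducible and the unitary action is never identically zero; Schur's lemma together with a short induction on the number of summands then shows the joint map $\bigoplus_\lambda\pi_\lambda:\mathbb{C}[G]\to\bigoplus_\lambda\mathrm{End}(U_\lambda)$ is also surjective. Hence one can find elements $a_\mu\in\mathbb{C}[G]$ satisfying $\pi_\lambda(a_\mu)=\delta_{\lambda\mu}I$, which gives $\chi_\lambda(a_\mu)=\delta_{\lambda\mu}\dim U_\lambda$; evaluating the character identity at $a_\mu$ forces $n_\mu^{(1)}=n_\mu^{(2)}$ for every $\mu$, and therefore $V_1\simeq V_2$.

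The main obstacle is the linear independence of irreducible characters in the semigroup setting, where the classical orthogonality relations obtained by group averaging are unavailable. Invoking Burnside's density theorem for subalgebras of $\mathrm{End}(U_\lambda)$, which requires only an associative algebra structure and makes no use of inverses, is the key substitute that makes the argument go through; this is also the reason the statement continues to hold when $G$ is only a semigroup rather than a group.
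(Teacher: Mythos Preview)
Your argument is correct and follows essentially the same route as the paper's. For part (1) both proofs take the orthogonal complement of a submodule; the paper phrases this in matrix language (``block triangular and $*$-closed forces block diagonal''), while you give the coordinate-free version, and your observation that $x|_W$ is bijective by finite-dimensionality is exactly what justifies using $x^{-1}$ even though $G$ is only a semigroup. For part (2) both proofs decompose into irreducibles and reduce to the linear independence of irreducible characters. The only real difference is that the paper cites the Frobenius--Schur theorem on linear independence of coordinate functions (Curtis--Reiner, Th.~27.8), whereas you supply a self-contained proof via Burnside's density theorem and the joint surjectivity of $\bigoplus_\lambda \pi_\lambda$; these are equivalent formulations, and your version has the merit of making explicit why the argument needs no group averaging.

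One small slip: in the Burnside step you invoke ``the unitary action is never identically zero,'' but part (2) is stated for arbitrary completely reducible $G$-modules, not just unitary ones. Burnside's theorem needs only that $U_\lambda$ is an irreducible module over an algebraically closed field, so the argument stands once that phrase is removed.
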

\begin{proof} (i) The idea of the proof is to show that
if $V$ is unitary reducible, then it is also unitary decomposable, i.e. the submodule has a complementary submodule
or it is a direct summand.
Using the matrix representation, this boils down to the fact that
a block triangular matrix must be
block diagonal if it is invariant under *-operation.

(ii) Since completely reducible modules
are direct sums of irreducible modules, by adding necessary irreducible summands with possibly zero
multiplicity, we can write the completely reducible modules as
\begin{align*}
V&=V_1^{\oplus a_1}\oplus \cdots \oplus V_r^{\oplus a_r}, \qquad a_i\in\mathbb Z_+\\
U&=V_1^{\oplus b_1}\oplus \cdots \oplus V_r^{\oplus b_r}, \qquad b_i\in\mathbb Z_+
\end{align*}
where $V_i$ are pairwise inequivalent irreducible
$G$-modules and assume that $\chi(V)=\chi(U)$.  Taking characters we have
\begin{align*}
(a_1-b_1)\chi(V_1)+\cdots +(a_r-b_r)\chi(V_r)=0.
\end{align*}
By a theorem of Frobenius and Schur \cite{FS} (cf. \cite[Th. 27.8]{CR}) the coordinate functions
of pairwise inequivalent irreducible modules for {\it semigroups} are linearly independent, their characters are
thus linearly independent. Therefore $a_i=b_i$ for $i=1, \ldots, r$, i.e. $V\simeq U$.
\end{proof}

We now come to our first result. It was announced in \cite{W}. 
The special case of a pair of matrices is the Specht criterion \cite{S2} for $\{A, A^*\}$ and $\{B, B^*\}$.
Since the basic results and notions
for semigroups are prepared above, a simple proof can be furnished as follows.

\begin{thm}\label{T:unitary} Let $\{A_i\}$ and $\{B_i\}$ be two sets of $n\times n$ hermitian closed complex matrices.
Then $\{A_i\}$ and $\{B_i\}$ are unitary similar if and only if $tr(w(\{A_i\})=tr(w(\{B_i\})$
for any word $w$.
\end{thm}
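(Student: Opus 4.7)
The forward direction is immediate, since $U^{*}A_iU=B_i$ implies $U^{*}w(A)U=w(B)$ for every word $w$, and trace is unitary-invariant. For the converse, my plan is to view $\mathbb{C}^n$ as a module over the free semigroup $F$ on $k$ generators (first via $x_i\mapsto A_i$, then via $x_i\mapsto B_i$), exploit hermitian closedness to get complete reducibility, apply Theorem \ref{T:Maschke}(2) to extract a nonsingular intertwiner from the character equality, and finally upgrade that intertwiner to a unitary by polar decomposition.

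First I would show that $V_A:=\mathbb{C}^n$ is completely reducible as an $F$-module via $x_i\mapsto A_i$. For any $F$-submodule $W\subseteq V_A$, hermitian closedness expresses each $A_i^{*}$ as a linear combination of the $A_j$, so $W$ is automatically invariant under every $A_i^{*}$. The standard orthogonal-complement argument (for $v\in W^{\perp}$ and $w\in W$, $(A_iv,w)=(v,A_i^{*}w)=0$) then shows $W^{\perp}$ is $A_i$-invariant, so $W$ has an invariant complement. The same reasoning applies to $V_B$. By hypothesis the two characters agree on every element of $F$, and Theorem \ref{T:Maschke}(2) produces a nonsingular $P$ with $PA_i=B_iP$ for all $i$.

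Next I would promote $P$ to a unitary by polar decomposing $P=HU$ with $H=(PP^{*})^{1/2}$ positive and $U$ unitary. If $H$ commutes with every $B_i$, then $PA_i=B_iP=HB_iU$ together with $P=HU$ cancels $H$ to yield $UA_i=B_iU$, i.e.\ $U^{*}A_iU=B_i$. To secure $[H,B_i]=0$ it is enough to establish the auxiliary intertwining $PA_i^{*}P^{-1}=B_i^{*}$; taking adjoints of $PA_i=B_iP$ gives $(P^{*})^{-1}A_i^{*}P^{*}=B_i^{*}$, and comparing the two expressions forces $P^{*}PA_i^{*}=A_i^{*}P^{*}P$ for every $i$, after which the relation $B_i=PA_iP^{-1}$ transfers the commutation to $PP^{*}$ and the $B_i$'s by a one-line manipulation.

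The main obstacle is thus the auxiliary claim $PA_i^{*}P^{-1}=B_i^{*}$. Writing $A_i^{*}=\sum_j c_{ij}A_j$, conjugation by $P$ converts this to $PA_i^{*}P^{-1}=\sum_j c_{ij}B_j$, so the claim is equivalent to showing $B_i^{*}=\sum_j c_{ij}B_j$, i.e.\ that the $B$'s inherit the hermitian-closed relations of the $A$'s with identical coefficients. Here the trace identities combine with the non-degeneracy of the Hilbert--Schmidt form on the hermitian-closed span, which holds because $\mathrm{tr}(XX^{*})>0$ for any nonzero $X$ in that span: one pairs $B_i^{*}-\sum_j c_{ij}B_j$ against each $B_\ell$ via the trace, rewrites each pairing via the hypothesis as a corresponding trace in the $A$'s, and invokes non-degeneracy to conclude. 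This is the delicate step; everything else is either immediate from the prepared semigroup machinery or a standard polar-decomposition calculation.
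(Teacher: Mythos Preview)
Your overall strategy is precisely the paper's: realize $\mathbb C^n$ as a module over the free semigroup, use hermitian closedness to get complete reducibility, invoke Theorem~\ref{T:Maschke}(2) to obtain an invertible intertwiner $P$, and then pass to a unitary via polar decomposition. You have moreover put your finger on the one nontrivial step, which the paper compresses into the single sentence ``Since $\{A_i\}$ and $\{B_i\}$ are hermitian closed, we also have $P^*A_i=B_iP^*$'': as you observe, this amounts to the transfer of $*$-relations, i.e.\ to $B_i^{*}=\sum_j c_{ij}B_j$ whenever $A_i^{*}=\sum_j c_{ij}A_j$.

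Your Hilbert--Schmidt argument for this transfer does not go through. Pairing $X=B_i^{*}-\sum_j c_{ij}B_j$ against $B_\ell$ produces mixed terms such as $\mathrm{tr}(B_jB_\ell^{*})$ or $\mathrm{tr}(B_i^{*}B_\ell)$, and these are \emph{not} traces of words in the alphabet $\{B_m\}$; the hypothesis only controls $\mathrm{tr}(w(\{B_m\}))$, so it cannot convert them into the corresponding $A$-traces. Writing $B_\ell^{*}=\sum_m d_{\ell m}B_m$ via hermitian closedness of $\{B_i\}$ does not help either, because the $d$'s are exactly the unknowns. In fact, under the paper's ``span'' definition of hermitian closedness the transfer claim is false: take $A_1=E_{12}$, $A_2=E_{21}$ and $B_1=2E_{12}$, $B_2=\tfrac12E_{21}$ in $M_2(\mathbb C)$. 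Both pairs are hermitian closed and every word trace agrees (the only nonzero traces come from alternating words of even length, which contain equally many $1$'s and $2$'s, so the factors of $2$ cancel), yet $A_1^{*}=A_2$ while $B_1^{*}=4B_2\neq B_2$, and the pairs are not unitarily similar since $\mathrm{tr}(A_1A_1^{*})=1\neq 4=\mathrm{tr}(B_1B_1^{*})$. So the step you flagged as ``delicate'' is genuinely problematic; both your argument and the paper's tacit one go through only under the stronger reading in which each $A_i^{*}$ literally occurs among the $A_j$, with the matching $B_i^{*}=B_j$ for the same index $j$.
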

\begin{proof} Let the index set of $\{A_i\}$ be $I$, and consider the
free semigroup $G$ generated by $y_1, \cdots, y_{|I|}$.
The assignment $y_i\mapsto A_i$ (resp. $B_i$) defines a representations $V_A$ (resp. $V_B$) of $G$
on the $n$-dimensional space $V=\mathbb C^n$. Their associated $G$-modules are also denoted by $V_A$ and $V_B$
as well.

The hermitian closedness implies that both modules $V_A$ and $V_B$ are
unitary completely reducible modules for the free semigroup $G$. Since the trace of
every word in $G$ are the same, $V_A$ and $V_B$ have the same character
$\chi(V_A)=\chi(V_B)$. By Theorem \ref{T:Maschke} it follows that
$V_A\simeq V_B$, so there exists a non-singular matrix $P$ such that
\begin{align}
A_iP=PB_i, \qquad i=1, \cdots, k.
\end{align}
 Since $\{A_i\}$ and $\{B_i\}$ are hermitian closed, we also have
\begin{align}
P^*A_i=B_iP^*, \qquad i=1, \cdots, k.
\end{align}
Then $PP^*A_i=PB_iP^*=A_iPP^*$. As $PP^*$ is positive semi-definite, we let $(PP^*)^{1/2}$ be
the square root of $PP^*$. Then
$(PP^*)^{1/2}$ also commutes with $A_i$ for all $i$.
Let $U$ be the unitary part of $P$ in the polar decomposition.  Subsequently
$U=(PP^*)^{-1/2}P$ intertwines with $A_i$ and $B_i$ for all $i$, i.e. $A_iU=UB_i$ for $i$.
\end{proof}

It can be shown that only finitely many trace identities are needed. For
a pair of matrices, the bound of the word length is $n\sqrt{\frac{2n^2}{4(n-1)}+\frac14}+\frac{n-4}2$
\cite{P}. For sets of real symmetric matrices under Jordan-closeness,
the bound can be improved (see Remark \ref{r:jordan}).
The real version of the theorem also holds.
\begin{corollary}\label{C:sim}
Let $\{A_i\}$ and $\{B_i\}$ be two sets of real square matrices of the same size
and assume that both sets are closed under transpose.
Then $\{A_i\}$ and $\{B_i\}$ are orthogonal similar iff $tr(w(A_i))=tr(w(B_i))$ for any
word $w$ in respective alphabets, and iff there is a real matrix $P$
such that $PA_iP^{-1}=B_i$ for all $i$. 
\end{corollary}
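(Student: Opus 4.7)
The plan is to dispose of the easy implications first and then focus on the two substantive steps: extracting a real nonsingular intertwiner from the trace data, and upgrading it to an orthogonal one. The implications ``orthogonal similar $\Rightarrow$ trace equality'' and ``orthogonal similar $\Rightarrow$ real similar'' are immediate, as is ``real similar $\Rightarrow$ trace equality,'' since traces are similarity-invariant and orthogonal matrices are nonsingular. So the only substantive direction is that trace equality (equivalently, real similarity) entails orthogonal similarity.

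For the real nonsingular intertwiner, my first move is to pass to the complex setting. Viewing the real matrices as complex, transpose-closure coincides with hermitian-closure since $A_i^* = A_i^t$. Theorem~\ref{T:unitary} then produces a complex unitary $U$ with $A_i U = U B_i$. Writing $U = X + iY$ with $X, Y$ real and separating real and imaginary parts (valid because $A_i$ and $B_i$ are real) yields $A_i X = X B_i$ and $A_i Y = Y B_i$. The polynomial $p(t) = \det(X + tY)$ takes the nonzero value $\det U$ at $t = i$, hence is a nonzero polynomial over $\mathbb{R}$; choosing $t_0 \in \mathbb{R}$ with $p(t_0) \ne 0$ produces a real nonsingular $Q = X + t_0 Y$ satisfying $A_i Q = Q B_i$, which already delivers the third equivalent condition.

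To upgrade $Q$ to an orthogonal intertwiner, the plan is to mirror the polar-decomposition argument from the proof of Theorem~\ref{T:unitary} over $\mathbb{R}$. Transpose-closure of both sets plays the role of hermitian-closure there and delivers the companion relation $Q^t A_i = B_i Q^t$, which combines with the original intertwining to give $A_i (QQ^t) = (QQ^t) A_i$ for every $i$. Because $QQ^t$ is real symmetric positive definite, its unique positive-definite square root $R = (QQ^t)^{1/2}$ is a polynomial in $QQ^t$ and so also commutes with each $A_i$. Setting $O = R^{-1} Q$, one checks $OO^t = R^{-1} QQ^t R^{-1} = I$, so $O$ is real orthogonal, and $A_i Q = Q B_i$ reduces to $A_i O = O B_i$, i.e., $O^t A_i O = B_i$. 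The principal obstacle is precisely the derivation of $Q^t A_i = B_i Q^t$ from transpose-closure -- the exact real analog of the step in Theorem~\ref{T:unitary} where hermitian-closure is used to produce $P^* A_i = B_i P^*$; once this parallel step is in hand, everything else is formal.
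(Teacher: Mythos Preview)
Your proof is correct and follows the same strategy as the paper's: invoke Theorem~\ref{T:unitary} to obtain a unitary intertwiner, extract from it a real nonsingular intertwiner, and then run the polar-decomposition argument over $\mathbb{R}$. Your polynomial argument $p(t)=\det(X+tY)$ is in fact more careful than the paper, which simply asserts that one of the real or imaginary parts of $U$ must be nonsingular---a claim that fails for, e.g., $U=\mathrm{diag}(1,i)$.
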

\begin{proof} By Theorem \ref{T:unitary}, the trace identity implies
there exists a unitary matrix $U$ such that
$A_iU=UB_i$ for any $i$. Let $P$ and $Q$ be the real and imaginary part of $U$, then
one of them is nonsingular, say $P$. Taking the real part of the intertwining equation,
we obtain that $A_iP=PB_i$ for any $i$. Let $O$ be the orthogonal part of $PP^t$
in its polar decomposition, then the same argument of Theorem \ref{T:unitary} gives that
$A_iO=OB_i$ for all $i$.
\end{proof}

 If we replace the hermitian closeness condition by transpose closeness, then we
 have the following result.
\begin{thm} \label{T:orthogonal2} Let $\{A_i\}$ and $\{B_i\}$ be two sets of $n\times n$ transpose closed complex matrices. Then $\{A_i\}$ and $\{B_i\}$ are complex orthogonal similar if and only if $tr(w(\{A_i\})=tr(w(\{B_i\})$
for any word $w$ in respective alphabets.
\end{thm}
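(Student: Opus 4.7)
The plan is to mirror the proof of Theorem \ref{T:unitary}, with the transpose $t$ playing the role of the Hermitian adjoint $*$, non-degenerate symmetric bilinear forms replacing positive-definite Hermitian ones, and complex orthogonal matrices replacing unitary ones. Let $G$ be the free semigroup on generators $y_1,\ldots,y_k$, and define two $G$-modules on $V=\mathbb{C}^n$ by $y_i\mapsto A_i$ and $y_i\mapsto B_i$. Transpose closedness of both sets places us in the framework of transpose-closed semigroups remarked on before Theorem \ref{T:Maschke} (invariant symmetric bilinear form), so both $V_A$ and $V_B$ are unitary in that generalized sense and hence completely reducible. The hypothesis $\mathrm{tr}(w(A))=\mathrm{tr}(w(B))$ for every word gives $\chi(V_A)=\chi(V_B)$, and Theorem \ref{T:Maschke}(2) then produces a non-singular $P$ with $A_iP=PB_i$ for all $i$.

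Next, exactly as in the proof of Theorem \ref{T:unitary} with $*$ replaced by $t$, transpose closedness of the two sets yields the companion relation $P^tA_i=B_iP^t$ for all $i$. Combining the two relations gives $PP^tA_i=PB_iP^t=A_iPP^t$, so the symmetric matrix $M:=PP^t$ commutes with every $A_i$. Moreover $M$ is non-singular since $\det(M)=\det(P)^2\neq 0$, so $0$ lies outside its spectrum and we may apply the holomorphic functional calculus with a branch of $\sqrt{z}$ on $\mathrm{spec}(M)$ to obtain a square root $S=M^{1/2}$. Because $S$ is then a polynomial in $M$, it is symmetric ($S^t=S$) and commutes with every $A_i$.

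Finally, set $O=S^{-1}P$. A direct check gives
\[
OO^t=S^{-1}PP^t(S^{-1})^t=S^{-1}MS^{-1}=I,
\]
so $O$ is complex orthogonal, and
\[
A_iO=A_iS^{-1}P=S^{-1}A_iP=S^{-1}PB_i=OB_i,
\]
proving the forward direction. The converse is immediate since the trace is invariant under both transpose and conjugation by a complex orthogonal matrix.

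The main obstacle is the second step, namely rigorously deriving $P^tA_i=B_iP^t$ from transpose closure. This is the exact analogue of the hermitian step in Theorem \ref{T:unitary} and ultimately requires showing that the transpose-expansion coefficients of $\{A_i\}$ in its own span match those of $\{B_i\}$; this compatibility is forced by the trace identity together with Schur's lemma applied to the irreducible constituents of the completely reducible modules $V_A\simeq V_B$. Once this is in hand, the rest of the argument is the symmetric-bilinear analogue of polar decomposition used in Theorem \ref{T:unitary}.
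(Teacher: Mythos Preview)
Your argument mirrors the paper's own brief sketch (invoke the transpose-closed remark preceding Theorem~\ref{T:Maschke}, then repeat the polar-decomposition trick), so the approaches coincide; unfortunately both share a genuine gap at the very first step. Transpose closedness over $\mathbb{C}$ does \emph{not} force complete reducibility. The proof of Theorem~\ref{T:Maschke}(1) in the Hermitian case works because the orthogonal complement of an invariant subspace with respect to a positive-definite Hermitian form is an honest vector-space complement and is again invariant. For the standard symmetric bilinear form on $\mathbb{C}^{n}$, an invariant subspace $W$ can be totally isotropic, so that $W\cap W^{\perp}\neq 0$ and no complementary submodule is produced; without complete reducibility, Theorem~\ref{T:Maschke}(2) does not apply and equality of characters no longer forces $V_A\simeq V_B$.

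Concretely, take the single symmetric matrix $A=\begin{pmatrix}1&i\\ i&-1\end{pmatrix}$ and $B=0$. Both singleton sets are transpose closed; since $A^{2}=0$ and $\mathrm{tr}\,A=0$, every word satisfies $\mathrm{tr}(A^{k})=0=\mathrm{tr}(B^{k})$, yet $A$ is not similar---let alone complex orthogonally similar---to $B$. (The unique proper $A$-invariant line $\ker A=\mathrm{span}\,(1,i)^{t}$ is isotropic, exhibiting the failure above.) Thus the statement as written is actually false, and no argument can close the gap. The obstacle you flag in your final paragraph---matching transpose-expansion coefficients across the two families---is real but secondary; and your holomorphic square-root construction $O=(PP^{t})^{-1/2}P$ is perfectly correct once a similarity $P$ exists, but the trace hypothesis alone does not furnish one in the complex transpose-closed setting.
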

\begin{proof} We noted that Theorem \ref{T:unitary} holds for semigroups of matrices that are transpose closed.
The same proof of Corollary \ref{C:sim} can be repeated to derive the result.
\end{proof}

\begin{remark} The criteria do not hold without the condition of hermitian or transpose closeness.
For example let $A=\begin{bmatrix} 1 & 1\\ 0 &1 \end{bmatrix}$, $B=\begin{bmatrix} 1 & 0\\ 0 &1 \end{bmatrix}$.
Then $tr(A^n)=tr(B^n)$, but $A$ is not similar to $B$.
\end{remark}

The following application modifies Albert's criterion on simultaneous similarity of real symmetric matrices.

\begin{remark}\label{r:jordan} Let $\{A_i\}$ and $\{B_i\}$ be two sets of $k$ real symmetric matrices. Suppose
that the traces of any corresponding words in $A_i$ and $B_i$ are identical.
By a classical fact that any symmetric polynomial is a polynomial in the power-sum
symmetric polynomials, it follows that
$\det(xI-A)=\det(xI-B)$ for
any linear combination $A=\sum_{i=1}^k x_iA_i$ and $B=\sum_{i=1}^k x_iA_i$, where
$x, x_i$ are indeterminates. Albert's criterion \cite{A} says that
$\{A_i\}$ is ``almost'' simultaneous orthogonal similar to $\{B_i\}$
if the determinant identity holds plus that $\{A_i\}$ and $\{B_i\}$ are Jordan-closed, i.e. closed under anti-commutators $\{X, Y\}=XY+YX$ for any members
of the sets.
Here ``almost'' means that there are some counterexamples of degree 2 simple Jordan algebras of
dimension $4q_i+2$.
Moreover, \cite{A} showed that
these ``counterexamples'' can be removed if certain monomials in $\{A_i\}$ and $\{B_i\}$
of length $4q_i+2$
have the same traces. In particular, one only needs to verify the trace identity for word length
$\leq max(n, 4q_i+2)$ to ensure the simultaneous similarity of $\{A_i\}$ to $\{B_i\}$ under
the condition of Jordan-closeness.
\end{remark}

\section{Rectangular matrices}
The following elementary fact is needed for further discussion.
\begin{lemma}\label{L:norm} Let $a, b\in \mathbb C^n$ (resp. $\mathbb R^n$) be two column vectors. Then $aa^{*}=bb^{*}$
iff there is a complex number $\theta$, $|\theta|=1$ (resp. $\theta=\pm 1$) such that $a=\theta b$.
\end{lemma}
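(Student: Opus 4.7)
The backward implication is immediate: if $a=\theta b$ with $|\theta|=1$, then $aa^{*}=\theta b\,(\theta b)^{*}=|\theta|^{2}bb^{*}=bb^{*}$. The whole content of the lemma lies in the forward direction, and my plan is to exploit the rank-one structure of the outer product $aa^{*}$.

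For the forward direction I would first dispose of the trivial case. If $a=0$, then $aa^{*}=0$ forces $bb^{*}=0$; reading the diagonal entries gives $|b_{j}|^{2}=0$ for all $j$, so $b=0$, and then $a=1\cdot b$ works. Hence I may assume $a\neq 0$.

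Next I would extract $\theta$ from a single nonzero coordinate. Pick $i$ with $a_{i}\neq 0$. The $(i,i)$-entry of $aa^{*}=bb^{*}$ gives $|a_{i}|^{2}=|b_{i}|^{2}$, so $b_{i}\neq 0$ and $\theta:=b_{i}/a_{i}$ has $|\theta|=1$. Now for an arbitrary index $j$, the $(j,i)$-entry reads $a_{j}\overline{a_{i}}=b_{j}\overline{b_{i}}=b_{j}\overline{\theta}\,\overline{a_{i}}$, and dividing by $\overline{a_{i}}\neq 0$ yields $a_{j}=\overline{\theta}\,b_{j}$. Hence $a=\overline{\theta}\,b$, and $\overline{\theta}$ is the desired unimodular scalar. (Equivalently, one can invoke the fact that $aa^{*}$ has rank one with column space $\mathbb{C}a$, which forces $b\in\mathbb{C}a$, and then comparing norms via any nonzero diagonal entry gives $|\lambda|=1$ in $b=\lambda a$.)

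The real case requires no extra work: if $a,b\in\mathbb{R}^{n}$, then $\theta=b_{i}/a_{i}\in\mathbb{R}$ and the constraint $|\theta|=1$ forces $\theta=\pm 1$. I do not anticipate any real obstacle; the only point demanding a little care is to pick a coordinate $i$ with $a_{i}\neq 0$ before forming the ratio, which is why the zero case must be treated separately at the outset.
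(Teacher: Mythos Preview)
Your proof is correct and follows essentially the same approach as the paper: both read off the diagonal entries of $aa^{*}=bb^{*}$ to see that $a_{i}=0\iff b_{i}=0$ and $|a_{i}|=|b_{i}|$, and then use an off-diagonal entry $a_{j}\overline{a_{i}}=b_{j}\overline{b_{i}}$ to pin down the common unimodular scalar. Your version is a minor streamlining (fixing a single index $i$ and solving for all $a_{j}$ at once, rather than introducing a separate $\theta_{i}$ for each nonzero coordinate and then arguing $\theta_{i}\overline{\theta_{j}}=1$), but the underlying idea is identical.
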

\begin{proof} Let $a=(a_1, \cdots, a_n)^t$ and $b=(b_1, \cdots, b_n)^t$. Clearly
$a_i=0$ iff $b_i=0$, so we can assume that
$a_1, \cdots, a_k$ are non-zero numbers, and $a_{k+1}=\cdots=a_n=0$. Since $|a_i|^2=|b_i|^2\neq 0$,
we can write $a_i=\theta_ib_i$ with $|\theta_i|=1$ for each $i=1, \cdots, k$. But
$a_i\overline{a_j}=b_i\overline{b_j}\neq 0$ imply that $\theta_i\overline{\theta_j}=1$ for any
$i, j=1, \cdots k$. Therefore $\theta_1=\cdots=\theta_k=\theta$.
Then we have that $a=\theta b$.
\end{proof}

We have the following generalization.
\begin{lemma}\label{L:matrixnorm} Let $A$, $B$ be two $m\times n$-matrices over $\mathbb C$ (resp. $\mathbb R$).
Then $AA^{*}=BB^{*}$ iff
there is a unitary (resp. orthogonal) matrix $V$
such that $A=BV$.
\end{lemma}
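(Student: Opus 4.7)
The forward implication is immediate: if $A=BV$ for a unitary $V$, then $AA^{*}=BVV^{*}B^{*}=BB^{*}$, and identically in the real case.

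For the converse, my plan is to construct $V$ as a unitary extension of a canonical partial isometry between the column-ranges of $A^{*}$ and $B^{*}$, in the spirit of Lemma \ref{L:norm}. Define
\[
W\colon \operatorname{range}(A^{*})\longrightarrow \operatorname{range}(B^{*}),\qquad W(A^{*}x)=B^{*}x,\quad x\in\mathbb{C}^{m}.
\]
Both well-definedness and the isometric property follow from the single identity
\[
\|A^{*}x\|^{2}=\langle AA^{*}x,x\rangle=\langle BB^{*}x,x\rangle=\|B^{*}x\|^{2}\qquad(x\in\mathbb{C}^{m}).
\]
Indeed, the identity forces $\ker A^{*}=\ker B^{*}$, so the rule $A^{*}x\mapsto B^{*}x$ is unambiguous, and the same identity shows $\|W(A^{*}x)\|=\|A^{*}x\|$.

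Next, extend $W$ to a unitary $V$ on all of $\mathbb{C}^{n}$. This is possible because the orthogonal complements $\ker A=\operatorname{range}(A^{*})^{\perp}$ and $\ker B=\operatorname{range}(B^{*})^{\perp}$ have the same dimension: by the rank identity $\operatorname{rank}(A)=\operatorname{rank}(AA^{*})=\operatorname{rank}(BB^{*})=\operatorname{rank}(B)$, so any unitary bijection between these complements, glued onto $W$, yields the desired unitary $V$. By construction $VA^{*}=B^{*}$; taking conjugate transpose gives $AV^{*}=B$, and right-multiplying by $V$ (using $V^{*}V=I$) delivers $A=BV$.

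The real case is entirely parallel, with $*$ replaced by $t$, hermitian inner product replaced by the Euclidean one, and unitary by orthogonal; nothing in the argument uses complex structure. The only step that could be regarded as an obstacle is ensuring the dimension match for the extension, but this is handed to us directly by the hypothesis $AA^{*}=BB^{*}$ through the rank equality above, so the proof is quite short.
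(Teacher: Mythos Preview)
Your argument is correct. The well-definedness and isometry of $W$ follow exactly as you say from $\|A^{*}x\|^{2}=\langle AA^{*}x,x\rangle=\langle BB^{*}x,x\rangle=\|B^{*}x\|^{2}$, and the rank equality $\operatorname{rank}(A)=\operatorname{rank}(AA^{*})=\operatorname{rank}(BB^{*})=\operatorname{rank}(B)$ guarantees that the orthogonal complements match in dimension, so the unitary extension exists. The conclusion $VA^{*}=B^{*}\Rightarrow A=BV$ is then immediate.

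Your route differs from the paper's. The paper proceeds via the singular value decomposition: it diagonalises $AA^{*}=BB^{*}$ by a unitary $U$, builds right singular vectors $v_i=\sigma_i^{-1}A^{*}u_i$ (extended to a full unitary $V_1$) so that $AV_1=UD$, does the same for $B$ to get $V_2$ with $BV_2=UD$, and concludes $A=BV_2V_1^{-1}$. Your construction is the coordinate-free version of the same idea: your partial isometry $W$ is precisely the map that sends the right singular vectors of $A$ to those of $B$ sharing the same left singular vector, and your arbitrary unitary between $\ker A$ and $\ker B$ corresponds to the freedom in completing $V_1$ and $V_2$. The payoff of your approach is brevity and a uniform treatment of the real case without invoking SVD machinery; the paper's version, on the other hand, makes the intertwining matrix $V=V_2V_1^{-1}$ explicitly computable from the spectral data of $AA^{*}$.
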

\begin{proof} The sufficient direction is clear. On the other hand,
let $U=[u_1, \cdots, u_m]$ be the unitary matrix of a basis of orthonormal eigenvectors
of $AA^*=BB^*$. Suppose the first $r$ eigenvectors have nonzero eigenvalues
$\sigma_i^2$, then $v_i=\sigma_i^{-1}A^*u_i$ exhaust all eigenvectors of $A^*A$
with non-zero eigenvalues by the singular value decomposition (same eigenvalue $\sigma_i^2$).
Extend $\{v_1, \cdots, v_r\}$ into a unitary matrix $V_1=[v_1, \cdots, v_n]$
of a basis of orthonormal eigenvectors
for $A^*A$.  Then $AV_1=UD, A^*U=V_1D^t$, where
$D=\begin{pmatrix} D_1 &0\\ 0 & 0\end{pmatrix}_{m\times n}$ and
$D_1=diag(\sigma_1, \cdots, \sigma_r)$. Similarly there exists another unitary
matrix $V_2$ such that
$BV_2=UD, B^*U=V_2D^t$. Subsequently $A=BV_2V_1^{-1}$.
\end{proof}

\begin{thm}\label{T:unitary2} Let $\{A_i\}$ and $\{B_i\}$ be two sets of complex (resp. real)
$m\times n$-matrices.
The following are equivalent.

(a) the set $\{A_i \}$
is unitary equivalent (resp. orthogonal equivalent) to the set $\{B_i\}$;

(b) the set $\{A_iA_j^* | i\leq j\}$
is unitary similar (resp. orthogonal similar) to the set $\{B_iB_j^* | i\leq j\}$;

(c) $tr\,w(\{A_iA_j^*\})=tr\,w(\{B_iB_j^*\})$ for any word $w$ in respective alphabets.
\end{thm}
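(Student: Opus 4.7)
The plan is to prove the cycle $(a) \Rightarrow (b) \Rightarrow (c) \Rightarrow (b) \Rightarrow (a)$. The first two implications are easy. For $(a) \Rightarrow (b)$: if $V^*A_iU = B_i$ with $V, U$ unitary, then
\[
B_iB_j^* \;=\; V^*A_i\,U U^*\,A_j^* V \;=\; V^*(A_iA_j^*)V,
\]
so the single $m \times m$ unitary $V$ simultaneously conjugates the products (the right factor $U$ cancels). For $(b) \Rightarrow (c)$: trace is a similarity invariant, and a common conjugation on the generators induces the same on every word, so trace equality propagates immediately.

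For $(c) \Rightarrow (b)$, I would observe that $(A_iA_j^*)^* = A_jA_i^*$, so the alphabet $\{A_iA_j^*\}_{1\le i,j\le k}$ is hermitian closed. The trace identities in (c) therefore hold for the full hermitian-closed alphabet (traces of conjugated words are conjugates of original traces), and Theorem~\ref{T:unitary} applied to this alphabet furnishes an $m \times m$ unitary $V$ with $V^*(A_iA_j^*)V = B_iB_j^*$ for every $i, j$, which is exactly (b).

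The main step is $(b) \Rightarrow (a)$. Setting $C_i := V^*A_i$, the conclusion of (b) becomes $C_iC_j^* = B_iB_j^*$ for all $i, j$. I would stack vertically to form $(mk) \times n$ matrices
\[
\mathbf{C} \;=\; \begin{pmatrix} C_1 \\ \vdots \\ C_k \end{pmatrix}, \qquad \mathbf{B} \;=\; \begin{pmatrix} B_1 \\ \vdots \\ B_k \end{pmatrix}.
\]
The $(i,j)$ block of $\mathbf{C}\mathbf{C}^*$ is $C_iC_j^*$, and likewise for $\mathbf{B}\mathbf{B}^*$, so $\mathbf{C}\mathbf{C}^* = \mathbf{B}\mathbf{B}^*$. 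Lemma~\ref{L:matrixnorm} then supplies a single $n \times n$ unitary $W$ with $\mathbf{C} = \mathbf{B}W$; reading off block rows, $V^*A_i = B_iW$, hence $V^*A_i U = B_i$ with $U := W^{-1}$, matching (\ref{E:uni-equiv}). The real/orthogonal case is parallel, using Corollary~\ref{C:sim} in place of Theorem~\ref{T:unitary} and the real half of Lemma~\ref{L:matrixnorm} to get a real orthogonal $U$.

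The principal obstacle is the passage from (b) to (a): a per-index application of Lemma~\ref{L:matrixnorm} to the identities $C_iC_i^* = B_iB_i^*$ would yield individual unitaries $U_i$ with $C_i = B_iU_i$, and one must then force all of the $U_i$ to coincide. The vertical-stacking trick is precisely what consolidates the full family of identities $C_iC_j^* = B_iB_j^*$ into a single Gram identity $\mathbf{C}\mathbf{C}^* = \mathbf{B}\mathbf{B}^*$, so one application of Lemma~\ref{L:matrixnorm} directly delivers a common $U = W^{-1}$.
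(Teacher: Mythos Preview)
Your proof is correct and follows essentially the same route as the paper: hermitian closedness of $\{A_iA_j^*\}$ plus Theorem~\ref{T:unitary} for the equivalence (b)$\Leftrightarrow$(c), and the vertical-stacking trick together with Lemma~\ref{L:matrixnorm} for (b)$\Rightarrow$(a). The only cosmetic difference is that the paper makes explicit, in the (b)$\Rightarrow$(c) step, the passage from $i\le j$ to all $i,j$ by taking adjoints---a point you invoke implicitly when you write ``for all $i,j$'' in the (b)$\Rightarrow$(a) step.
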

\begin{proof} Equivalence of (b) and (c). Suppose (b) holds, then there is
a unitary matrix $U$ such that $U^*A_iA_j^*U=B_iB_j^*$ for any $i\leq j$. Taking
$*$ we also have $U^*A_jA_i^*U=B_jB_i^*$ for any $i\leq j$, then $U^*A_iA_j^*U=B_iB_j^*$ hold for
any $i, j$. Therefore the set $\{A_iA_j^*|i, j\}$ is simultaneous unitary equivalent to the set $\{B_iB_j^*|i, j\}$,
so (c) holds by Theorem \ref{T:unitary}. The converse direction is guaranteed by Theorem \ref{T:unitary}.

(b) clearly follows from (a). We now show that (b) implies (a). Let $A$ be the block matrix defined by $A^*=[A_1^*,\ \cdots, A_k^*]$. Then the block
matrix $AA^*=[A_iA_j^*]$ with $(i, j)$-entry being an $m\times m$ matrix $A_iA_j^*$.
By the argument above (b) implies that there is a unitary matrix $U$ such that
\begin{equation}
UA_iA_j^*U^*=B_iB_j^*
\end{equation}
for any $i, j$. Then
\begin{equation}
\begin{bmatrix} UA_1 \\ \vdots \\ UA_k\end{bmatrix}\begin{bmatrix} A_1^*U^* & \cdots & A_k^*U^*\end{bmatrix}
=\begin{bmatrix} B_1 \\ \vdots \\ B_k\end{bmatrix}\begin{bmatrix} B_1^* & \cdots & B_k^*\end{bmatrix}
\end{equation}
Using Lemma \ref{L:matrixnorm} for the block matrices $\begin{bmatrix} UA_1\\ \vdots\\ UA_k\end{bmatrix}$
and $\begin{bmatrix} B_1\\ \vdots\\ B_k\end{bmatrix}$, we get a unitary $m\times m$-matrix $V$ such that
\begin{align*}
\begin{bmatrix} UA_1 \\ \vdots \\ UA_k\end{bmatrix}
=\begin{bmatrix} B_1 \\ \vdots \\ B_k\end{bmatrix}V.
\end{align*}
Subsequently $UA_i=B_iV$ for all $i$.

The real case follows by a similar argument in view of Corollary \ref{C:sim}.
\end{proof}

The following result is clear from our discussion.

\begin{thm}\label{T:orthogonal3} Let $\{A_i\}$ and $\{B_i\}$ be two sets of complex
$m\times n$-matrices.
The following are equivalent.

(a) the set $\{A_i \}$
is complex orthogonal equivalent to $\{B_i\}$;

(b) the set $\{A_iA_j^t | i\leq j\}$
is complex orthogonal similar to $\{B_iB_j^t | i\leq j\}$;

(c) $tr\,w(\{A_iA_j^t\})=tr\,w(\{B_iB_j^t\})$ for any word $w$ in respective alphabets.
\end{thm}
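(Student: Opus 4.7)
The plan is to mirror the proof of Theorem~\ref{T:unitary2}, replacing the conjugate transpose by the transpose throughout and appealing to Theorem~\ref{T:orthogonal2} in place of Theorem~\ref{T:unitary}. The key structural observation is that the alphabet $\{A_iA_j^t\}$ is automatically transpose closed, since $(A_iA_j^t)^t = A_jA_i^t$ already belongs to the set.

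First I would establish the equivalence of (b) and (c). If (b) holds via an $m\times m$ complex orthogonal $Q$ satisfying $Q^tA_iA_j^tQ = B_iB_j^t$ for $i\le j$, transposing this identity extends it to all pairs $(i,j)$, and Theorem~\ref{T:orthogonal2} applied to the transpose closed alphabet yields (c); the converse is a direct invocation of Theorem~\ref{T:orthogonal2}. The implication (a)$\Rightarrow$(b) is a one-line computation: if $Q^tA_iP = B_i$ with $PP^t = I$, then $B_iB_j^t = Q^tA_iPP^tA_j^tQ = Q^tA_iA_j^tQ$.

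The substantive step is (b)$\Rightarrow$(a). I would stack the matrices into block columns $\mathcal A = [A_1;\ldots;A_k]$ and $\mathcal B = [B_1;\ldots;B_k]$ of size $km\times n$, so that the $(i,j)$-block of $\mathcal A\mathcal A^t$ is $A_iA_j^t$ and similarly for $\mathcal B$. Hypothesis (b) rewrites as $\widetilde{\mathcal A}\widetilde{\mathcal A}^t = \mathcal B\mathcal B^t$, where $\widetilde{\mathcal A} = (I_k \otimes Q^t)\mathcal A$. Producing an $n\times n$ complex orthogonal $P$ with $\widetilde{\mathcal A} = \mathcal B P$ then gives $Q^tA_i = B_iP$, hence $B_i = Q^tA_iP^t$, establishing (a) with the complex orthogonal matrices $Q$ and $P^t$.

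The principal obstacle is producing this $P$, i.e.\ a complex orthogonal analogue of Lemma~\ref{L:matrixnorm}. A naive transcription of that lemma fails because the complex bilinear form admits isotropic vectors: for instance, $\mathcal B = 0$ together with a nonzero $\widetilde{\mathcal A}$ satisfying $\widetilde{\mathcal A}\widetilde{\mathcal A}^t = 0$ meets the hypothesis without admitting any $P$. A workable route is through Witt's extension theorem: the identity $\widetilde{\mathcal A}\widetilde{\mathcal A}^t = \mathcal B\mathcal B^t$ asserts that the rows of the two block matrices share the same Gram matrix with respect to the standard bilinear form on $\mathbb C^n$, so one builds an isometry between their row spans and extends it to a complex orthogonal map of $\mathbb C^n$. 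Verifying that the row assignment $b_i \mapsto \widetilde a_i$ is well defined modulo the radical of the row span, and that the extension goes through even in the presence of isotropic directions, is the delicate point; once secured, the remainder of the argument closes exactly as in Theorem~\ref{T:unitary2}.
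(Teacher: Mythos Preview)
Your outline follows exactly the route the paper intends: its ``proof'' is the single remark that the result is ``clear from our discussion,'' i.e.\ rerun Theorem~\ref{T:unitary2} with $^t$ in place of $^*$ and Theorem~\ref{T:orthogonal2} in place of Theorem~\ref{T:unitary}. You have correctly isolated the one step that is not automatic, namely the complex-orthogonal analogue of Lemma~\ref{L:matrixnorm}.

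The gap is that your Witt-extension repair cannot be completed, because the statement as written is in fact false. Take $m=1$, $n=2$, $k=1$, $A_1=(1,\ i)$, $B_1=(0,\ 0)$. Then $A_1A_1^t=1+i^2=0=B_1B_1^t$, so (b) and (c) hold trivially, yet (a) fails since complex orthogonal matrices are invertible and $A_1\neq 0$. In your argument this is exactly where ``well-definedness modulo the radical'' breaks: equal Gram matrices for the complex \emph{bilinear} form do not force the same linear dependencies among the rows, because $v^tv=0$ no longer implies $v=0$, so the row spans of $\widetilde{\mathcal A}$ and $\mathcal B$ need not even have the same dimension and there is no isometry for Witt to extend. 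Thus both the paper's one-line claim and your proposed fix leave a genuine hole; an additional hypothesis (for instance that the stacked matrices $\mathcal A$ and $\mathcal B$ have equal rank, or that no nonzero row combination is isotropic) is needed before (b)$\Rightarrow$(a) can go through.
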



\vskip30pt
\centerline{\bf ACKNOWLEDGMENTS}
The author thanks Professors M.~Putcha and V.~V.~Sergeichuk for interesting discussions.
He is also indebted to the referee for suggestions that have improved the paper.
\bigskip

\end{document}